\author{Evan Camrud%
	\thanks{Electronic address: \texttt{ecamrud@iastate.edu}; Corresponding author} }
\affil{Mathematics Department, Iowa State University, Ames, IA 50011}
\title{Applications of a distributional fractional derivative to Fourier analysis and its related differential equations}
\newtheorem{Theorem}{Theorem}
\theoremstyle{definition}
\newtheorem{Definition}{Definition}
\newtheorem{Remark}{Remark}
\begin{document}

\maketitle

\textbf{Abstract.} A new definition of a fractional derivative has recently been developed, making use of a fractional Dirac delta function as its integral kernel. This derivative allows for the definition of a distributional fractional derivative, and as such paves a way for application to many other areas of analysis involving distributions. This includes (but is not limited to): the fractional Fourier series (i.e. an orthonormal basis for fractional derivatives), the fractional derivative of Fourier transforms, and fundamental solutions to differential equations such as the wave equation. This paper observes new results in each of these areas.

\section{Introduction}

Recent research in fractional differentiation has produced an operator whose results intersect with past fractional derivatives, and also allows for an extension to fractional derivatives \textit{in the sense of distributions} and \textit{fractional derivatives of distributions} \cite{Camrud}. The operator is defined formally as:

\begin{Definition}
	
	Let $f\in X_t(\Omega)\subseteq X^{-c}(\Omega)$ a distributional function space with trivial constant, and let $z_0\in\partial \Omega$ (note $z_0$ may be infinite).
	
	For $\alpha\in\mathbb{C}$, the $\alpha^{th}$ distributional differintegral of $f(z)$, with respect to the variable $z$, is
	
	\begin{equation}
	\begin{split}
	\text{\LARGE\S}_z ^\alpha f(z)&=\frac{1}{\Gamma(\alpha)}\int_{z_0}^z f(\zeta)(z-\zeta)^{\alpha-1}d\zeta\\	
	&=\frac{1}{\Gamma(\alpha)}\oint_{\gamma} f(\zeta)(z-\zeta)^{\alpha-1}H(z-\zeta)d\zeta
	\end{split}
	\end{equation}
	where $\gamma$ is a simple closed curve in $\Omega$ containing the points $z_0$ and $z$, with $H(z-\zeta)$ becoming the real-valued Heaviside step function when $\gamma$ is parameterized by a real variable.\footnote{That is, if $\gamma(t):[t_0,t_1]\to \Omega$ with $\gamma(t_0)=\gamma(t_1)=z_0$ and $\gamma(t')=z$ for $t'\in[t_0,t_1]$, then $H(z-\zeta)=H\big(z-\gamma(t)\big)=1$ if $t\in[t_0,t')$ and $H(z-\zeta)=H\big(z-\gamma(t)\big)=0$ if $t\in(t',t_1]$.}
\end{Definition}

However, in an applied setting, it is much easier to view the definition as

\begin{Definition}
	For a test function $\phi\in\mathcal{D}(\mathbb{R})$, the distributional differintegral is
	\begin{equation}
	\phi^{(-\alpha)}(x)=\int_{\mathbb{R}}\phi(t)\delta^{(-\alpha)}(x-t)dt
	\end{equation}
	where $\delta^{(-\alpha)}(x)=\frac{x^{\alpha-1}}{\Gamma(\alpha)}H(x)$, with $H(x)$ the Heaviside step function.
\end{Definition}
For functions in a space that are not test functions, one defines $f^{(-\alpha)}(x)=\lim_{n\to\infty}\phi_n^{(-\alpha)}(x)$ where $\{\phi_n\}$ is a sequence of test functions converging to $f$ in norm. Similarly, one defines the fractional differintegral of distributions via $T^{(-\alpha)}[\phi]=e^{-i\pi \alpha}\cdot T[\phi^{(-\alpha)}]$ for all test functions $\phi$.

To ensure that the operator satisfies the index law, that is $\text{\Large\S}_x^\alpha\text{\Large\S}_x^\beta=\text{\Large\S}_x^{\alpha+\beta}$, the derivative direction of the operator needs to be injective. This causes a problem when dealing with constants since $\frac{d}{dx}1=\frac{d}{dx}0=0$. To sidestep this issue, integer-valued derivatives of constants are given a ``label'' that keeps them identifiable in the codomain of the operator. These are called the \textit{Zero Functions} and they occur in the form

\begin{equation}
\emptyset(x)=\frac{d}{dx}1=\frac{x^{-1}}{\Gamma(0)}
\end{equation}
and more importantly as a fractional differintegral
\begin{equation}
\emptyset^{(-\alpha)}(x)=\frac{x^{\alpha-1}}{\Gamma(\alpha)}
\end{equation}
such that $\delta^{(-\alpha)}(x)=\emptyset^{(-\alpha)}(x)H(x)$. Since the operator is linear, one may use the above to define the differintegral of an arbitrary constant.

\section{Fractional Fourier series}

It is well known that if $f\in C(\mathbb{T})$ then the Fourier series $\sum_{n=-\infty}^\infty c_n e^{i n x}$ with $c_n=\int_{\mathbb{T}}f(x)e^{-i n x}dx$ uniformly converges to $f$ on $\mathbb{T}$. Furthermore, if one restricts the functions to $L^2(\mathbb{T})$, then the sequence $\{e^{i n x}\}_{n=-\infty}^\infty$ forms an orthonormal basis for the Hilbert space.

It seems natural to ask whether fractional derivatives of the exponentials can serve as linearly dense subset for the natural derivative extension of $L^2(\mathbb{T})$, namely the Sobolev space $W^{\alpha,2}(\mathbb{T})$.

First note that from the above definition of the differintegral, we have
\begin{equation}
\text{\LARGE\S}_x^\alpha e^{i n x}=(i n)^{-\alpha} e^{i n x}=n^{-\alpha} e^{i (nx-\alpha\frac{\pi}{2})}
\end{equation}
where any necessary branch cuts are taken along the negative imaginary axis: $\arg z\in (-\frac{\pi}{2},\frac{3\pi}{2})$.

If we allow ourselves linear combinations with complex coefficients, however, it may be seen that
\begin{equation}
\sum_{n=-\infty}^\infty c_n (in)^{-\alpha} e^{inx}=\sum_{n=-\infty}^\infty d_n e^{inx}
\end{equation}
for $d_n=(in)^{-\alpha}c_n$. Therefore any linear combination of $\Big\{\text{\Large\S}_x^\alpha e^{inx}\Big\}_{n=-\infty}^\infty$ is also a linear combination of $\{e^{i n x}\}_{n=-\infty}^\infty$, which is an orthonormal subset of $W^{\alpha,2}(\mathbb{T})$.

\begin{Theorem}
	The set $\{e^{i n x}\}_{n=-\infty}^\infty$ forms an orthonormal basis for $W^{\alpha,2}(\mathbb{T})$. More importantly, the series $f^{(\beta)}(x)=\sum_{n=-\infty}^\infty d_n e^{inx}$ is uniformly convergent for all $\beta\in[0,\alpha]$ and the coefficients are of the form $d_n=(in)^\beta \int_{\mathbb{T}}f(x)e^{-inx}dx$.
\end{Theorem}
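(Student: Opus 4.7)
My guiding principle is to push everything back to the classical Fourier basis on $L^2(\mathbb{T})$ using the eigenvalue identity $\text{\Large\S}_x^\beta e^{inx}=(in)^{-\beta}e^{inx}$ established immediately before the theorem. Because any linear combination of $\{\text{\Large\S}_x^\alpha e^{inx}\}$ is, by that identity, a linear combination of $\{e^{inx}\}$ with rescaled complex coefficients, the two families span the same subspace, and the Sobolev statements reduce to statements about the trigonometric system together with how the map $T_\alpha:f\mapsto f^{(\alpha)}$ transfers structure.

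For the orthonormal basis claim I would first fix the natural Sobolev inner product
\begin{equation}
\langle f,g\rangle_{W^{\alpha,2}}=\int_{\mathbb{T}}f\,\overline{g}\,dx+\int_{\mathbb{T}}f^{(\alpha)}\,\overline{g^{(\alpha)}}\,dx,
\end{equation}
so that $T_\alpha$ is bounded from $W^{\alpha,2}(\mathbb{T})$ into $L^2(\mathbb{T})$. Orthogonality of $\{e^{inx}\}$ is then direct: the eigenvalue identity reduces both summands to multiples of $\int_{\mathbb{T}}e^{i(n-m)x}\,dx=2\pi\delta_{n,m}$, and one rescales by the standard $1/\sqrt{2\pi}$ factor to obtain orthonormality. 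Linear density I would establish by contraposition: if $f\in W^{\alpha,2}$ is orthogonal in the Sobolev inner product to every $e^{inx}$, then in particular $\int_{\mathbb{T}}f(x)e^{-inx}\,dx=0$ for every $n$, whence $f=0$ by classical completeness of the trigonometric system in $L^2(\mathbb{T})$; the fractional derivative part of the inner product is automatically handled since $T_\alpha$ commutes with multiplication by $e^{inx}$ in the frequency domain.

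The subtler part is uniform convergence of $f^{(\beta)}(x)=\sum_n(in)^\beta c_n e^{inx}$ on the whole range $\beta\in[0,\alpha]$. My plan is to apply the Weierstrass M-test, which amounts to controlling $\sum_{n\neq 0}|n|^\beta|c_n|$. Cauchy--Schwarz gives
\begin{equation}
\sum_{n\neq 0}|n|^\beta|c_n|\leq\Bigl(\sum_{n\neq 0}|n|^{2\alpha}|c_n|^2\Bigr)^{1/2}\Bigl(\sum_{n\neq 0}|n|^{2(\beta-\alpha)}\Bigr)^{1/2},
\end{equation}
where the first factor is finite for every $f\in W^{\alpha,2}$. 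The main obstacle is that the second factor is only finite for $\beta<\alpha-\tfrac12$, whereas the theorem asserts uniform convergence up to the endpoint $\beta=\alpha$. To close this gap I expect either to impose slightly more regularity on $f$ and invoke the Sobolev embedding $W^{s,2}(\mathbb{T})\hookrightarrow C(\mathbb{T})$ for $s>\tfrac12$ applied to $f^{(\beta)}\in W^{\alpha-\beta,2}$, or to use a dyadic frequency decomposition combined with Abel summation to extract the extra decay of $\{c_n\}$ needed at the endpoint. This endpoint is where I would concentrate essentially all of the work.
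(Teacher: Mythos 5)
Your plan stalls exactly where you predict, and the obstruction at the endpoint is not one that a cleverer summation argument can remove: for a general $f\in W^{\alpha,2}(\mathbb{T})$ the function $f^{(\alpha)}$ is merely an element of $L^2(\mathbb{T})$, and a uniformly convergent trigonometric series has a continuous sum, so the claim at $\beta=\alpha$ fails for any $f$ whose $\alpha^{th}$ derivative is essentially unbounded. Neither of your proposed rescues closes this: the Sobolev embedding $W^{s,2}(\mathbb{T})\hookrightarrow C(\mathbb{T})$ applied to $f^{(\beta)}\in W^{\alpha-\beta,2}(\mathbb{T})$ requires $\alpha-\beta>\tfrac12$, which is exactly the range your Cauchy--Schwarz estimate already covers, and Abel summation or a dyadic decomposition cannot manufacture decay of $\{c_n\}$ beyond $\ell^2$ summability against the weight $|n|^{2\alpha}$. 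For what it is worth, the paper's own proof reaches the endpoint only by asserting that the $L^2$-basis expansion of $f^{(\beta)}$ converges uniformly, which is the same false step made silently rather than flagged; you have therefore correctly located the genuine defect in the theorem, but your proposal as written does not prove the statement, and the endpoint case cannot be proved without strengthening the hypothesis (e.g.\ to $f\in W^{s,2}$ for some $s>\alpha+\tfrac12$, or to absolute summability of $\{|n|^{\alpha}c_n\}$).

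One further point on the basis claim: with the Sobolev inner product you chose, $\langle e^{inx},e^{inx}\rangle_{W^{\alpha,2}}=2\pi\bigl(1+|n|^{2\alpha}\bigr)$, so the exponentials are orthogonal but not orthonormal after the $1/\sqrt{2\pi}$ rescaling; they would also have to be divided by $\sqrt{1+|n|^{2\alpha}}$, at which point the coefficient formula in the theorem changes. The paper sidesteps this by tacitly measuring orthonormality in the $L^2$ pairing and invoking $W^{\alpha,2}(\mathbb{T})\subset L^2(\mathbb{T})$, which gives totality of the system in the $L^2$ topology but not completeness as an orthonormal basis of the Sobolev space. Your density-by-contraposition argument is sound for totality in $L^2$, but you should state explicitly which inner product the words ``orthonormal basis'' refer to, since the answer determines whether the first sentence of the theorem is even true.
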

\begin{proof}
	
It is trivial to show that $\{e^{i n x}\}_{n=-\infty}^\infty$ is orthonormal, and since $W^{\alpha,2}(\mathbb{T})\subset L^2(\mathbb{T})$ we know that $\{e^{i n x}\}_{n=-\infty}^\infty$ is a basis.

Let $f\in W^{\alpha,2}(\mathbb{T})$ and $\beta\in[0,\alpha]$. Then $f^{(\beta)}\in L^2(\mathbb{T})$.

Since $\{e^{i n x}\}_{n=-\infty}^\infty$ is an orthonormal basis for $L^2(\mathbb{T})$ there exists $d_n=\int_{\mathbb{T}}f^{(\beta)}(x)e^{-inx}dx$ such that $\sum_{n=-\infty}^\infty d_n e^{inx}=f^{(\beta)}(x)$ uniformly.

Since the sum converges uniformly, by the integral definition of $\text{\Large\S}_x^\beta$ we have that
\begin{equation}
\begin{split}
f(x)&=\text{\LARGE\S}_x^\beta f^{(\beta)}(x)=\text{\LARGE\S}_x^\beta\bigg[\sum_{n=-\infty}^\infty d_n e^{inx}\bigg]=\sum_{n=-\infty}^\infty d_n\text{\LARGE\S}_x^\beta e^{inx}\\
&=\sum_{n=-\infty}^\infty d_n (in)^{-\beta}e^{inx}=\sum_{n=-\infty}^\infty c_n e^{inx}
\end{split}
\end{equation}
where $c_n=(in)^{-\beta}d_n$ or rather $d_n=(in)^\beta c_n=(in)^\beta\int_{\mathbb{T}}f(x)e^{-inx}dx$.

\end{proof}

In the sense of distributions, as $\sum_{n=-\infty}^\infty e^{inx}=\delta(x)$ we may also assert the following:

\begin{Theorem}
	\begin{equation}
	\sum_{n=-\infty}^\infty n^{\alpha}e^{i(nx+\alpha\frac{\pi}{2})}=\delta^{(\alpha)}(x)
	\end{equation}
	in the sense of distributions.
\end{Theorem}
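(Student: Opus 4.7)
The plan is to obtain the identity by applying the distributional fractional derivative $\text{\LARGE\S}_x^{-\alpha}$ termwise to the standard Dirac-comb identity $\sum_{n\in\mathbb{Z}} e^{inx} = \delta(x)$ on $\mathbb{T}$, and then invoking the explicit action of the differintegral on pure exponentials that was already recorded in the previous subsection.

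First I would observe that the identity $\text{\LARGE\S}_x^{\alpha} e^{inx} = (in)^{-\alpha} e^{inx}$, with $\alpha$ replaced by $-\alpha$ and the same branch $\arg z \in (-\pi/2, 3\pi/2)$ retained, yields
\[
\text{\LARGE\S}_x^{-\alpha} e^{inx} = (in)^{\alpha} e^{inx} = n^{\alpha} e^{i(nx+\alpha\pi/2)},
\]
which is exactly the $n$-th summand appearing in the theorem. Since $\delta^{(\alpha)} = \text{\LARGE\S}_x^{-\alpha}\delta$ by the definition of the distributional differintegral, the only remaining task is to justify exchanging $\text{\LARGE\S}_x^{-\alpha}$ with the infinite sum (and, as a routine footnote, to verify that the branch-cut arithmetic for $(in)^{\alpha}$ with $n<0$ agrees with the expression $n^{\alpha} e^{i\alpha\pi/2}$ in the statement).

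The hard part, and the only nontrivial step, is this interchange. I would handle it by passing to the Dirichlet partial sums $S_N(x) = \sum_{|n|\le N} e^{inx}$, which are known to converge to $\delta$ in $\mathcal{D}'(\mathbb{T})$, and then appealing to sequential continuity of $T \mapsto T^{(-\alpha)}$ on $\mathcal{D}'(\mathbb{T})$. This continuity follows from the duality formula $T^{(-\alpha)}[\phi] = e^{-i\pi\alpha}\, T[\phi^{(-\alpha)}]$ once one knows that $\phi \mapsto \phi^{(-\alpha)}$ is continuous on $\mathcal{D}(\mathbb{T})$, which is immediate from the convolution representation in Definition 2: the operator amounts to convolution against the fixed distribution $\delta^{(-\alpha)}$ and hence preserves the usual test-function topology. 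Once the interchange is licensed, applying $\text{\LARGE\S}_x^{-\alpha}$ to $S_N$ termwise and letting $N\to\infty$ gives $\delta^{(\alpha)}(x) = \sum_{n\in\mathbb{Z}} n^\alpha e^{i(nx+\alpha\pi/2)}$ in $\mathcal{D}'(\mathbb{T})$, which is the stated identity.
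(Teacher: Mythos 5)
Your proposal is correct in substance but justifies the crucial interchange differently from the paper. The paper never touches the partial sums of the delta's Fourier series directly: it pairs $\delta^{(\alpha)}$ with an arbitrary test function $\phi\in\mathcal{D}(\mathbb{T})$ via the convolution identity $\int_{\mathbb{R}}\phi(t)\delta^{(\alpha)}(x-t)\,dt=\phi^{(\alpha)}(x)$, expands \emph{the test function} in its uniformly convergent Fourier series $\sum_n c_n e^{int}$, and swaps that sum with the integral to land on $\sum_n c_n n^{\alpha}e^{i(nx+\alpha\pi/2)}$; the interchange is licensed by the uniform (indeed rapid) convergence of the test function's series. You instead expand $\delta$ itself through the Dirichlet partial sums $S_N$, apply the operator termwise by linearity, and pass to the limit using weak-$*$ convergence $S_N\to\delta$ together with sequential continuity of $T\mapsto T^{(-\alpha)}$ coming from the duality formula $T^{(-\alpha)}[\phi]=e^{-i\pi\alpha}T[\phi^{(-\alpha)}]$. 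Your route is more structural and isolates exactly what must be checked (that $\phi\mapsto\phi^{(-\alpha)}$ is continuous on test functions, so that the adjoint is weak-$*$ sequentially continuous), whereas the paper's is more computational and leans on classical Fourier facts about $\phi$. Two small cautions: your claim that convolution against $\delta^{(-\alpha)}$ ``preserves the usual test-function topology'' is slightly too quick, since on $\mathbb{R}$ the image $\phi^{(-\alpha)}$ loses compact support (this is harmless on $\mathbb{T}$, but the paper itself conflates the two settings, so you inherit rather than create the issue); and the branch-cut bookkeeping for $n<0$, where $in$ sits on the cut $\arg z=-\pi/2$, is a genuine wrinkle rather than a routine footnote, though the paper glosses over it in exactly the same way.
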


\begin{proof}
	Since $\int_{\mathbb{R}} \phi(t)\delta^{(\alpha)}(x-t)dt=\phi^{(\alpha)}(x)$ for all test functions $\phi\in\mathcal{D}(\mathbb{T})$, and since the series $\phi(x)=\sum_{n=-\infty}^\infty c_n e^{inx}$ (for $c_n=\int_{\mathbb{T}}\phi(x)e^{-inx}dx$) is uniformly convergent, we have that
	
	\begin{equation}
	\begin{split}
	\phi^{(\alpha)}(x)&=\int_\mathbb{R} \phi(t),\delta^{(\alpha)}(x-t)dt=\int_{\mathbb{R}}\bigg[ \sum_{n=-\infty}^\infty c_n e^{int},\delta^{(\alpha)}(x-t)\bigg]dt\\
	&=\sum_{n=-\infty}^\infty c_n\cdot\bigg[\int_{\mathbb{R}} e^{int}\delta^{(\alpha)}(x-t)dt\bigg]=\sum_{n=-\infty}^\infty c_n n^\alpha e^{i(nx+\alpha\frac{\pi}{2})}
	\end{split}
	\end{equation}
	Since this property holds for all $\phi\in\mathcal{D}(\mathbb{T})$ we have convergence in the sense of distributions.
\end{proof}

\section{Differintegral of Fourier transforms}

It is often the case that Fourier transforms serve as a gateway for fractional differentiation, as a multiplication operator $T_{\alpha}[\widetilde{f}(\omega)]=(i\omega)^\alpha \widetilde{f}(\omega)$ in Fourier space is equivalent to differentiation in the domain space. One would hope that a similar result follows from the above differintegral. Indeed we see the following:

\begin{Theorem}
	\begin{equation}
	\mathcal{F}[f^{(\alpha)}(t)]=(i\omega)^\alpha \widetilde{f}(\omega) \text{ and } \mathcal{F}^{-1}[\widetilde{f}^{(\alpha)}(\omega)]=(-it)^\alpha f(t).
	\end{equation}
\end{Theorem}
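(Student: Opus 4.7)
My approach is to reduce both identities to the convolution theorem, using Definition 2 to express the differintegral as $f^{(-\alpha)} = f * \delta^{(-\alpha)}$. The crux of the proof is therefore the computation of the Fourier transform of the kernel $\delta^{(-\alpha)}(x) = \frac{x^{\alpha-1}}{\Gamma(\alpha)}H(x)$.

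For the strip $0 < \mathrm{Re}(\alpha) < 1$, where the integral converges absolutely, I would compute directly
\begin{equation}
\mathcal{F}[\delta^{(-\alpha)}](\omega) = \int_0^\infty \frac{x^{\alpha-1}}{\Gamma(\alpha)} e^{-i\omega x}\, dx = (i\omega)^{-\alpha},
\end{equation}
by substituting $u = i\omega x$ and rotating the contour back to the positive real axis via Cauchy's theorem (justified by the decay of $e^{-u}$ in the right half-plane), with the branch cut fixed by the paper's convention $\arg z \in (-\pi/2, 3\pi/2)$. The remaining integral is precisely $\Gamma(\alpha)$, which cancels the prefactor.

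To extend this identity to all complex $\alpha$, I would invoke analytic continuation: both $\delta^{(-\alpha)}$ (promoted via the distributional extension $T^{(-\alpha)}[\phi] = e^{-i\pi\alpha} T[\phi^{(-\alpha)}]$ when $\mathrm{Re}(\alpha) \le 0$) and $(i\omega)^{-\alpha}$ form meromorphic families of tempered distributions in $\alpha$, and two meromorphic families of tempered distributions agreeing on a nonempty open set agree everywhere they are defined. Combining with the standard convolution theorem $\mathcal{F}[f * g] = \widetilde{f}\,\widetilde{g}$ then yields $\mathcal{F}[f^{(-\alpha)}](\omega) = (i\omega)^{-\alpha}\widetilde{f}(\omega)$, and replacing $\alpha$ by $-\alpha$ gives the first claim. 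The inverse-transform identity follows by the mirror argument: $\mathcal{F}^{-1}$ differs from $\mathcal{F}$ only by a sign in the exponential kernel, so the same contour computation produces $\mathcal{F}^{-1}[\delta^{(-\alpha)}](t) \propto (-it)^{-\alpha}$, and the convolution theorem applied in $\omega$-space converts this into multiplication by $(-it)^\alpha f(t)$ after the sign flip.

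The main obstacle is the analytic-continuation step and, in particular, verifying that the distributional identities behave correctly at integer values of $\alpha$, where $\Gamma(\alpha)$ has poles and $(i\omega)^{-\alpha}$ transitions between a locally integrable function and a purely distributional object; the zero-function apparatus of the introduction is what keeps these boundary cases consistent. A secondary technical point is ensuring that $f$ lies in a function space for which both sides are tempered distributions so that the convolution theorem applies in the first place, which one can arrange by first proving the result on the Schwartz class and then passing to the general case by density.
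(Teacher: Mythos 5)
Your proof follows essentially the same route as the paper's: the paper likewise treats $f^{(\alpha)}$ as the convolution $\int_{\mathbb{R}}f(\tau)\delta^{(\alpha)}(t-\tau)\,d\tau$, exchanges the order of integration by Fubini, and reduces everything to the kernel identity $\int_{\mathbb{R}}\delta^{(\alpha)}(t-\tau)e^{-i\omega t}\,dt=(i\omega)^{\alpha}e^{-i\omega\tau}$, which is exactly the Fourier transform of $\delta^{(\alpha)}$ that your argument turns on. The only real difference is that the paper asserts this kernel identity without computation (implicitly via the eigenfunction relation $\text{\Large\S}_x^\alpha e^{inx}=(in)^{-\alpha}e^{inx}$ from Section 2), whereas you establish it by contour rotation on the strip $0<\mathrm{Re}(\alpha)<1$ and extend by analytic continuation in $\alpha$, which in fact supplies the justification the paper's Fubini step is missing when $\mathrm{Re}(\alpha)>0$ and the kernel is no longer locally integrable.
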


\begin{proof}
	\begin{equation}
	\begin{split}
	\mathcal{F}[f^{(\alpha)}(t)]&=\frac{1}{\sqrt{2\pi}}\int_\mathbb{R} f^{(\alpha)}(t)e^{-i\omega t}dt=\frac{1}{\sqrt{2\pi}}\int_{\mathbb{R}}\bigg[ \int_{\mathbb{R}}f(\tau)\delta^{(\alpha)}(t-\tau)d\tau \bigg]e^{-i\omega t} dt\\
	&\frac{1}{\sqrt{2\pi}}=\int_{\mathbb{R}} f(\tau)\bigg[\int_{\mathbb{R}}\delta^{(\alpha)}(t-\tau)e^{-i\omega t}dt\bigg]d\tau =\frac{1}{\sqrt{2\pi}}\int_{\mathbb{R}}f(t)(i\omega)^\alpha e^{-i\omega t}dt\\
	&=(i\omega)^\alpha \frac{1}{\sqrt{2\pi}}\int_{\mathbb{R}}f(t)e^{-i\omega t}dt=(i\omega)^\alpha \widetilde{f}(\omega).
	\end{split}
	\end{equation}
	The exchange of integration is justified by Fubini. The reverse direction follows by the same process.
\end{proof}

These results follow with what has been surmised, and often used to develop fractional operators. It should be mentioned, however, that in the case of the fractional Laplacian (developed from an inverse Fourier transform), these results differ. Often, a fractional Laplacian may be defined as $(-\Delta)^\alpha =\mathcal{F}^{-1}|\omega|^{2\alpha}\mathcal{F}$ since $\mathcal{F}[-\Delta]=-\mathcal{F}[\Delta]=-(i\omega)^2=\omega^2=|\omega|^2$. It should be noted, however, that for fractional powers, $\omega^{2\alpha}\neq |\omega|^{2\alpha}$ (certainly for any $\omega\notin[0,\infty)$) and this is where the two definitions differ, as the power of the differintegral may be extend to all $\alpha\in\mathbb{C}$ giving $\mathcal{F}[(-\Delta)^{\alpha}]=-(i\omega)^{2\alpha}\neq |\omega|^{2\alpha}$.

\section{Fractional wave equation}

While all of the above might seem ``nice'', it is always important to ask the question, ``How can this be applied?''. Fractional differential equations have recently been increasing in popularity due to their ability to model complex systems.

We observe the wave equation
\begin{equation}
\frac{\partial^2 f}{\partial t^2}-\frac{\partial^2 f}{\partial x^2}=0
\end{equation}
which when utilizing the change of variables $u=x+t$, $v=x-t$ becomes
\begin{equation}
\frac{\partial^2 f}{\partial u\partial v}=0.
\end{equation}

Since the differintegral defines the fractional integral and derivatives of distributions, we seek a fundamental solution to the above. By observation, this fundamental solution is $\frac{1}{2}H(u)H(v)$ since $\frac{\partial^2}{\partial u\partial v}\frac{1}{2}H(u)H(v)=\frac{1}{2}\delta(u)\delta(v)=\delta(u,v)$. Shifting back with the change of variables, we receive the fundamental solution of $\frac{1}{2}H(x+t)H(x-t)$.

\subsection{Fractionalization in $u,v$ space}

Since the equation is easier to solve in the $u,v$ variables, we fractionalize that part, giving the equation
\begin{equation}
\frac{\partial^\beta}{\partial u^\beta}\frac{\partial^\alpha }{\partial v^\alpha}f=0.
\end{equation}
This may be equivalently written as
\begin{equation}
\int_{\mathbb{R}}\bigg[\int_{\mathbb{R}}f(z,y)\delta^{(\alpha)}(v-y)dy\bigg]\delta^{(\beta)}(u-z)dz=0.
\end{equation}
In this form, it is easy to find a fundamental solution, namely
\begin{equation}
\frac{1}{2}\delta^{(-\beta)}(u)\delta^{(-\alpha)}(v)=\frac{u^{\beta-1}v^{\alpha-1}}{2\Gamma(\beta)\Gamma(\alpha)}H(u)H(v)=\frac{(x+t)^{\beta-1}(x-t)^{\alpha-1}}{2\Gamma(\beta)\Gamma(\alpha)}H(x+t)H(x-t).
\end{equation}
We see as $\alpha,\beta\to 1$ we achieve the fundamental solution for the regular wave equation.

Utilizing this fundamental solution we recover a final solution of the form
\begin{equation}
f(x,t)=\frac{(x-t)^{\alpha-1}}{\Gamma(\alpha)}\phi(x+t)+\frac{(x+t)^{\beta-1}}{\Gamma(\beta)}\psi(x-t)
\end{equation}
where $\phi,\psi$ are arbitrary and fixed for specific initial/boundary conditions. Note the following cases:

\begin{enumerate}
	\item When $\alpha=\beta=1$ we recover D'Alembert's formula,
	\begin{equation}
	f(x,t)=\phi(x+t)+\psi(x-t).
	\end{equation}
	\item When $\alpha=1,\beta=0$ we recover
	\begin{equation}
	f(x,t)=\phi(x+t)
	\end{equation}
	which is equivalent to the solution of the differential equation $\frac{\partial f}{\partial t}-\frac{\partial f}{\partial x}=0$.
	\item When $\alpha=0,\beta=1$ we recover
	\begin{equation}
	f(x,t)=\psi(x-t)
	\end{equation}
	which is equivalent to the solution of the differential equation $\frac{\partial f}{\partial t}+\frac{\partial f}{\partial x}=0$.
	\item When $\alpha=\beta=0$ we recover
	\begin{equation}
	f(x,t)=0
	\end{equation}
	which is equivalent to the solution of the differential equation $f=0$. No surprise there.
\end{enumerate}

From the above we recognize that the equivalence between fractional equations of

\begin{equation}
\frac{\partial^\beta}{\partial u^\beta}\frac{\partial^\alpha}{\partial v^\alpha}f=0\iff\Big(\frac{\partial}{\partial x}+\frac{\partial}{\partial t}\Big)^\beta\Big(\frac{\partial}{\partial x}-\frac{\partial}{\partial t}\Big)^\alpha f=0
\end{equation}
and hence we must utilize the generalized binomial formula to understand the fractional differential equation of the form
\begin{equation}
\bigg[\sum_{k=0}^\infty \frac{\Gamma(\beta+1)}{\Gamma(\beta-k+1)\Gamma(k+1)}\frac{\partial^{\beta-k}}{\partial x^{\beta-k}}\frac{\partial^k}{\partial t^k}\bigg]\bigg[\sum_{k=0}^\infty \frac{(-1)^k\Gamma(\alpha+1)}{\Gamma(\alpha-k+1)\Gamma(k+1)}\frac{\partial^{\alpha-k}}{\partial x^{-k}}\frac{\partial^k}{\partial t^k}\bigg]f=0.
\end{equation}
Observe that since these sums contain infinite (positive and negative) powers of partial derivatives, solutions to fractional differential equations are forced to be infinitely differentiable. Of course, this is always possible when searching for a distributional solution, but sometimes a classical solution may be very difficult to find.

Let us observe one solution to the fractional wave equation with prescribed initial conditions. 

\begin{Theorem}
	For $\alpha\in(0,1]$ and $\beta\in(1-\alpha,1]$ for uniqueness, the solution to the fractional wave equation
	\begin{equation}
	\Big(\frac{\partial}{\partial t}+\frac{\partial}{\partial x}\Big)^\beta\Big(\frac{\partial}{\partial t}-\frac{\partial}{\partial x}\Big)^\alpha f=0
	\end{equation}
	with initial conditions $f(x,0)=g(x)$ and $f_t(x,0)=h(x)$ is
	\begin{equation}
	\begin{split}
	f(x,t)&=\frac{1}{2}\Big(\frac{x+t}{x-t}\Big)^{1-\alpha}\bigg[g(x+t)+\int_0^{x+t}h(y)+y^{\alpha+\beta-3}\bigg(\int_0^y (\alpha-\beta)z^{2-\alpha-\beta}g'(z)+(\alpha+\beta-2)z^{2-\alpha-\beta}h(z)dz\bigg)dy\bigg]\\
	&+\frac{1}{2}\Big(\frac{x-t}{x+t}\Big)^{1-\beta}\bigg[g(x-t)-\int_0^{x-t}h(y)+y^{\alpha+\beta-3}\bigg(\int_0^y (\alpha-\beta)z^{2-\alpha-\beta}g'(z)+(\alpha+\beta-2)z^{2-\alpha-\beta}h(z)dz\bigg)dy\bigg]
	\end{split}
	\end{equation}
	where the integrals $\int_0^x (\circ)dy$ are considered as inverse derivatives.
\end{Theorem}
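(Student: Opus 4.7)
The plan is to start from the general solution of the fractionalized equation in $(u,v)$ variables already derived in the paper, rescale the two arbitrary functions so the ansatz matches the shape of the claimed answer, and then pin those functions down using the initial data.

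First, I would write the general solution
\begin{equation*}
f(x,t) = \frac{(x-t)^{\alpha-1}}{\Gamma(\alpha)}\phi(x+t) + \frac{(x+t)^{\beta-1}}{\Gamma(\beta)}\psi(x-t)
\end{equation*}
in the rescaled form
\begin{equation*}
f(x,t) = \tfrac{1}{2}\bigl(\tfrac{x+t}{x-t}\bigr)^{1-\alpha} G(x+t) + \tfrac{1}{2}\bigl(\tfrac{x-t}{x+t}\bigr)^{1-\beta} H(x-t),
\end{equation*}
where $G(u) = 2 u^{1-\alpha}\phi(u)/\Gamma(\alpha)$ and $H(v) = 2 v^{1-\beta}\psi(v)/\Gamma(\beta)$. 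Setting $t = 0$ collapses both prefactors to $1$ and yields the algebraic relation $G(x) + H(x) = 2g(x)$.

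Second, I would compute $f_t(x,0)$ by applying the product rule to each of the three factors in each term. At $t = 0$ the contributions coming from differentiating the two power prefactors each produce a $\tfrac{1}{x}$ weight, and the derivative collapses to
\begin{equation*}
f_t(x,0) = \frac{1-\alpha}{x}G(x) + \tfrac{1}{2}G'(x) + \frac{\beta-1}{x}H(x) - \tfrac{1}{2}H'(x) = h(x).
\end{equation*}
Substituting $H = 2g - G$ (equivalently, passing to the difference $D := G - H$) eliminates one unknown and produces the first-order linear ODE
\begin{equation*}
D'(x) + \frac{2-\alpha-\beta}{x}\,D(x) \;=\; 2h(x) - \frac{2(\beta-\alpha)}{x}\,g(x).
\end{equation*}
Multiplying by the integrating factor $x^{2-\alpha-\beta}$ and integrating from $0$ to $x$, then using integration by parts on the $g$-term (to move the derivative off $g$) followed by a Fubini swap of the order of integration, repackages the solution in exactly the nested form
\begin{equation*}
D(x) = 2\int_0^x h(y)\,dy + 2\int_0^x y^{\alpha+\beta-3}\!\int_0^y\!\bigl[(\alpha-\beta)z^{2-\alpha-\beta}g'(z) + (\alpha+\beta-2)z^{2-\alpha-\beta}h(z)\bigr]\,dz\,dy
\end{equation*}
displayed in the theorem. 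Finally, $G = g + D/2$ and $H = g - D/2$ are substituted back into the rescaled ansatz to recover the stated formula.

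The main obstacle will be bookkeeping rather than conceptual difficulty: carefully tracking signs and cancellations in the three-factor product rule for $f_t$, and then converting the ``native'' ODE output of the form $x^{\alpha+\beta-2}\int_0^x y^{2-\alpha-\beta}[\cdots]\,dy$ into the iterated form with outer weight $y^{\alpha+\beta-3}$ stated in the theorem. The hypothesis $\alpha + \beta > 1$ (equivalently $\beta > 1-\alpha$) enters at two points: it controls the behavior at the origin needed for the integration-by-parts boundary term to vanish, and it forces the homogeneous piece $C\,x^{\alpha+\beta-2}$ of the ODE to be incompatible with the requisite regularity of $D$ at $x = 0$, giving the uniqueness claim.
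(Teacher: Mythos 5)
Your proposal is correct and matches the paper's own argument almost step for step: the paper likewise rewrites the general $(u,v)$-solution in the rescaled form $\frac{1}{2}\big(\frac{x+t}{x-t}\big)^{1-\alpha}\big[g(x+t)+\int_0^{x+t}(h+\eta)\big]+\frac{1}{2}\big(\frac{x-t}{x+t}\big)^{1-\beta}\big[g(x-t)-\int_0^{x-t}(h+\eta)\big]$, uses $f(x,0)=g$ to reduce to one unknown, and converts $f_t(x,0)=h$ into the first-order linear ODE $x\eta'+(3-\alpha-\beta)\eta=(\alpha-\beta)g'+(\alpha+\beta-2)h$ solved by an integrating factor, your $D$ being exactly $2\int_0^x(h+\eta)\,dy$. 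Two minor notes: because the paper differentiates its integral equation before solving, $g'$ enters the source term directly and your extra integration-by-parts/Fubini step is avoided (and as described it goes the wrong way --- you must move a derivative \emph{onto} $g$, not off it); your closing observation that $\beta>1-\alpha$ is what excludes the homogeneous solution $Cx^{\alpha+\beta-2}$ actually supplies a justification for the uniqueness claim that the paper itself never gives.
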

\begin{proof}
Note
\begin{equation}
f(x,0)=(-x)^{\alpha-1}\phi(x)+x^{\beta-1}\psi(-x)=g(x)
\end{equation}
\begin{equation}
f_t(x,0)=(\alpha-1)(-x)^{\alpha-2}\phi(x)+(-x)^{\alpha-1}\phi'(x)+(\beta-1)x^{\beta-2}\psi(-x)+x^{\beta-1}\psi'(-x)=h(x)
\end{equation}

Noting that for $\alpha=\beta=1$ the solution comes in the form
\begin{equation}
\phi(x+t)=\frac{1}{2}\bigg[g(x+t)+\int_0^{x+t}h(y)dy\bigg] \text{ and }\psi(x-t)=\frac{1}{2}\bigg[g(x-t)-\int_0^{x-t}h(y)dy\bigg]
\end{equation}
so we surmise that for arbitrary $\alpha,\beta$ we recover a solution in the form
\begin{equation}
f(x,t)=\frac{1}{2}\Big(\frac{x+t}{x-t}\Big)^{1-\alpha}\bigg[g(x+t)+\int_0^{x+t}h(y)+\eta(y)dy\bigg]+\frac{1}{2}\Big(\frac{x-t}{x+t}\Big)^{1-\beta}\bigg[g(x-t)-\int_0^{x-t}h(y)+\nu(y)dy\bigg]
\end{equation}
for some functions $\eta,\nu$ dependent on $\alpha,\beta$. We utilize the first identity, giving
\begin{equation}
f(x,0)=g(x)+\int_0^x h(y)+\eta(y)dy-\int_0^x h(y)+\nu(y)dy=g(x)+\int_0^x\eta(y)-\nu(y)dy=g(x).
\end{equation}
Hence it is clear that $\eta=\nu$. The second identity leads to
\begin{equation}
f_t(x,0)=h(x)+\eta(x)+\frac{(1-\alpha)}{x}g(x)+\frac{(1-\alpha)}{x}\int_0^x h(y)+\eta(y)dy+\frac{(\beta-1)}{x}g(x)+\frac{(1-\beta)}{x}\int_0^x h(y)+\eta(y)dy=h(x)
\end{equation}
or rather
\begin{equation}
x\eta(x)+(2-\alpha-\beta)\int_0^x \eta(y)dy=(\alpha-\beta)g(x)+(\alpha+\beta-2)\int_0^x h(y)dy
\end{equation}
which becomes the differential equation
\begin{equation}
(3-\alpha-\beta)\eta(x)+x\eta'(x)=(\alpha-\beta)g'(x)+(\alpha+\beta-2)h(x).
\end{equation}
Elementary methods (as this is a first order linear differential equation) give the solution of this equation to be
\begin{equation}
\eta(x)=x^{\alpha+\beta-3}\int_0^x (\alpha-\beta)y^{2-\alpha-\beta}g'(y)+(\alpha+\beta-2)y^{2-\alpha-\beta}h(y)dy.
\end{equation}
Note that we consider $\int_0^x (\circ) dy$ as the \textit{inverse derivative} rather than an integral. This side-steps requiring that the integral converges at the lower bound. See \cite{Camrud} for the definition of inverse derivative.
\end{proof}

We now compute this result for $f(x,0)=\sin(x)$ and $f_t(x,0)=\cos(x)$. Then
\begin{equation}
\begin{split}
f(x,t)&=\frac{1}{2}\Big(\frac{x+t}{x-t}\Big)^{1-\alpha}\bigg[\sin(x+t)+\int_0^{x+t}\cos(y)+2(\alpha-1)y^{\alpha+\beta-3}\bigg(\int_0^y z^{2-\alpha-\beta}\cos(z)dz\bigg)dy\bigg]\\
&+\frac{1}{2}\Big(\frac{x-t}{x+t}\Big)^{1-\beta}\bigg[\sin(x-t)-\int_0^{x-t}\cos(y)+2(\alpha-1)y^{\alpha+\beta-3}\bigg(\int_0^y z^{2-\alpha-\beta}\cos(z)dz\bigg)dy\bigg]
\end{split}
\end{equation}

Figures 1,2,3,4 show a three-dimensional graph of $Re[f]$ versus time and space for chosen values of $\alpha=\beta=0,\frac{1}{2},\frac{3}{4},1$ respectively.

\subsection{Fractionalized in $x,t$ space}

Instead of fractionalizing the wave equation as
\begin{equation}
\frac{\partial^\beta}{\partial u^\beta}\frac{\partial^\alpha}{\partial v^\alpha}f=0
\end{equation}
we can fractionalize it as
\begin{equation}
\frac{\partial^\beta}{\partial t^\beta}f-\frac{\partial^\alpha}{\partial x^\alpha}f=0.
\end{equation}

\begin{Theorem}
	A general solution to the fractional wave equation
	\begin{equation}
	\frac{\partial^\beta}{\partial t^\beta}f-\frac{\partial^\alpha}{\partial x^\alpha}f=0.
	\end{equation}
	is
	\begin{equation}
	f(x,t)=\sum_{k=0}^\infty c_k e^{\omega^\alpha t+\omega^\beta x}.
	\end{equation}
\end{Theorem}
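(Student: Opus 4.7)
The plan is to use a pure exponential ansatz and exploit the fact—implicit in the formula $\text{\LARGE\S}_x^\alpha e^{inx}=(in)^{-\alpha}e^{inx}$ from Section 2 and in Theorem 3 of Section 3—that complex exponentials are eigenfunctions of the fractional differintegral. Concretely, for any $b\in\mathbb{C}$ (with branch cuts chosen along the negative imaginary axis as in the rest of the paper) one has $\frac{\partial^\alpha}{\partial x^\alpha}e^{bx}=b^\alpha e^{bx}$, and analogously in $t$.

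First I would substitute $f(x,t)=e^{at+bx}$ into $\frac{\partial^\beta}{\partial t^\beta}f-\frac{\partial^\alpha}{\partial x^\alpha}f=0$, reducing the PDE to the algebraic dispersion relation $a^\beta=b^\alpha$. The natural single-parameter slicing of this complex variety is obtained by introducing an auxiliary $\omega\in\mathbb{C}$ and setting
\begin{equation}
a=\omega^\alpha,\qquad b=\omega^\beta,
\end{equation}
so that $a^\beta=\omega^{\alpha\beta}=b^\alpha$ holds identically. Each choice of $\omega$ thus yields an elementary solution $e^{\omega^\alpha t+\omega^\beta x}$, and by linearity of the two fractional operators any finite linear combination is again a solution. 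Passing to a countable family $\{\omega_k\}_{k\ge 0}$ with coefficients $c_k$ then gives the series in the statement (reading the unindexed $\omega$ as $\omega_k$).

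To make the infinite sum rigorous, I would impose enough decay on the $c_k$ (and growth control on the $\omega_k$) so that the series and its term-by-term fractional derivatives converge absolutely and uniformly on compact subsets of $(x,t)$. Under such a hypothesis, the interchange
\begin{equation}
\frac{\partial^\beta}{\partial t^\beta}\sum_{k=0}^\infty c_k e^{\omega_k^\alpha t+\omega_k^\beta x}=\sum_{k=0}^\infty c_k\,\omega_k^{\alpha\beta}\,e^{\omega_k^\alpha t+\omega_k^\beta x}=\frac{\partial^\alpha}{\partial x^\alpha}\sum_{k=0}^\infty c_k e^{\omega_k^\alpha t+\omega_k^\beta x}
\end{equation}
follows by the same uniform-convergence argument used in Theorem 1, together with the integral definition of $\text{\LARGE\S}^\alpha$.

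The main obstacle, I expect, is not the verification of the dispersion relation but two interlocking subtleties: (i) consistent bookkeeping of branch cuts, since $\omega^\alpha$ and $\omega^\beta$ are multi-valued and one must fix the same branch for every summand in order to apply the eigenfunction identity; and (ii) the precise sense in which the formula is a \emph{general} solution, which really amounts to the claim that the family $\{e^{\omega^\alpha t+\omega^\beta x}\}_{\omega\in\mathbb{C}}$ is rich enough to express arbitrary initial/boundary data via an appropriate expansion (a fractional analogue of the Fourier/Laplace representation). I would therefore present the theorem as providing a complete family of elementary solutions whose linear span exhausts the solution space, rather than as an explicit solution for prescribed Cauchy data.
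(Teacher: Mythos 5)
Your proposal is correct and takes essentially the same route as the paper: both rest on the eigenfunction identity for exponentials under the fractional derivative and reduce the equation to the algebraic relation $a^\beta=b^\alpha$, resolved by the parametrization $a=\omega^\alpha$, $b=\omega^\beta$. The paper arrives there via separation of variables $f(x,t)=e^{\omega t}g(x)$, the eigenvalue problem $\frac{d^\alpha}{dx^\alpha}g=\omega^\beta g$, and a final substitution $\omega\mapsto\omega^\alpha$, which is equivalent to your direct plane-wave ansatz; your extra attention to branch consistency and term-by-term convergence only makes explicit what the paper defers to the Remark following the theorem.
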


\begin{proof}

Since $e^{\omega y}$ is an eigenfunction of $\text{\Large\S}_y^\alpha$ with eigenvalue $\omega^{-\alpha}$, we assume that our solution is time-periodic. That is
\begin{equation}
f(x,t)=e^{\omega t}g(x)
\end{equation}
and we recover the eigenvalue equation
\begin{equation}
\frac{\partial^\beta}{\partial t^\beta}f-\frac{\partial^\alpha}{\partial x^\alpha}f=0\iff \frac{\partial^\alpha}{\partial x^\alpha}f(x,t)=\omega^{\beta}f(x,t).
\end{equation}
Of course this may be manipulated further into
\begin{equation}
\frac{d^\alpha}{dx^\alpha}g(x)=\omega^{\beta}g(x)
\end{equation}
which has the solution
\begin{equation}
g(x)=c\cdot e^{\omega^{\beta/\alpha}x}
\end{equation}
for $c\in\mathbb{C}$. Therefore the solution to the differential equation is of the form
\begin{equation}
f(x,t)=\sum_{k=0}^\infty c_k e^{\omega^\alpha t+\omega^\beta x}
\end{equation}
where we took $\omega\mapsto\omega^\alpha$ to simplify the expression.

\end{proof}

\begin{Remark}
	For any $\alpha,\beta\notin\mathbb{Z}$ the expressions $\omega^\alpha,\omega^\beta$ are multivalued. Even further, for $\alpha,\beta\notin\mathbb{Q}$ the expressions $\omega^\alpha,\omega^\beta$ have infinitely many values. For this reason, it is important to study the possibilities for the function $f(x,t)=\sum_{k=0}^\infty c_k e^{\omega^\alpha t+\omega^\beta x}$, as this is not a simple Fourier series.
	
	Further, even for $\alpha,\beta\in\mathbb{Z}$, alternate solutions are of the form
	\begin{equation}
	f(x,t)=\sum_{k=0}^\infty c_k e^{\omega t+\omega^{\beta/\alpha} x} \text{ and }
	f(x,t)=\sum_{k=0}^\infty c_k e^{\omega^{\alpha/\beta} t+\omega x}
	\end{equation}
	where in both cases $\omega^{\beta/\alpha}=(\omega^\beta)^{1/\alpha},\omega^{\alpha/\beta}=(\omega^\alpha)^{1/\beta}$ are multivalued expressions. This is precisely the case for $\alpha=\beta=2$ (the regular wave equation) when we recover solutions in the form
	\begin{equation}
	f(x,t)=\sum_{k=0}^\infty c_k e^{i\omega (x+t)}+\sum_{k=0}^\infty c_k e^{i\omega (x-t)}=F(x+t)+G(x-t)
	\end{equation}
	D'Alembert's formula.
\end{Remark}

We compute this result for $f(x,0)=\sin(x)$. This, along with assuming periodicity, provides a well-defined solution:

\begin{equation}
f(x,t)=\frac{1}{2i}\Big[e^{i^{\alpha/\beta} t+i x}-e^{(-i)^{\alpha/\beta} t-i x}\Big]
\end{equation}

Figures 5,6,7 show a three-dimensional graph of $f$ versus time and space for chosen values of $\alpha/\beta<1,\alpha/\beta>1,\alpha/\beta=1$ respectively.

\newpage

\begin{figure}
	\includegraphics{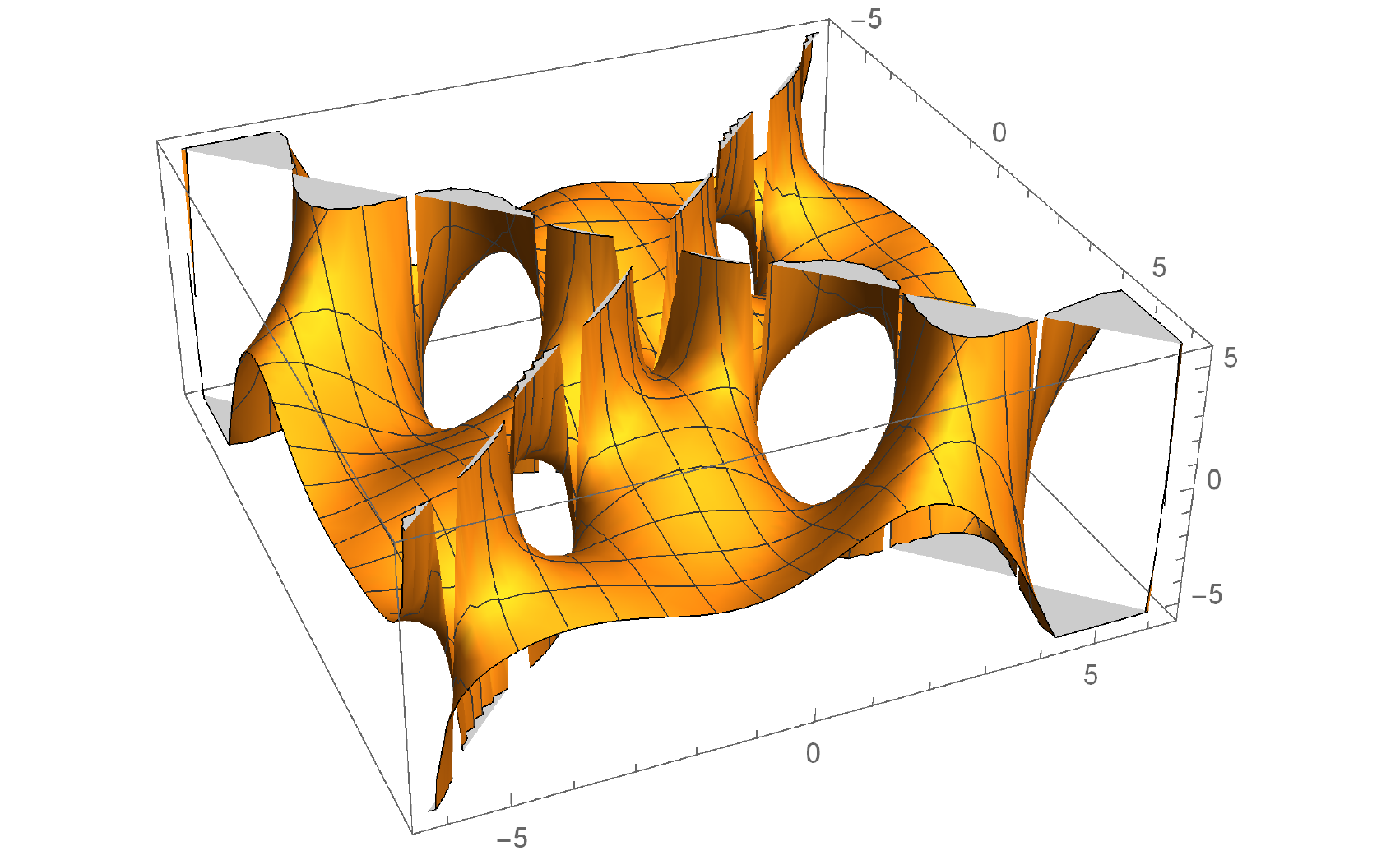}
	\caption[Figure 1.]{Solution for $\alpha=\beta=0$. This is an overdetermined system, so it is hard to argue what the results imply.}
\end{figure}

\begin{figure}
	\includegraphics{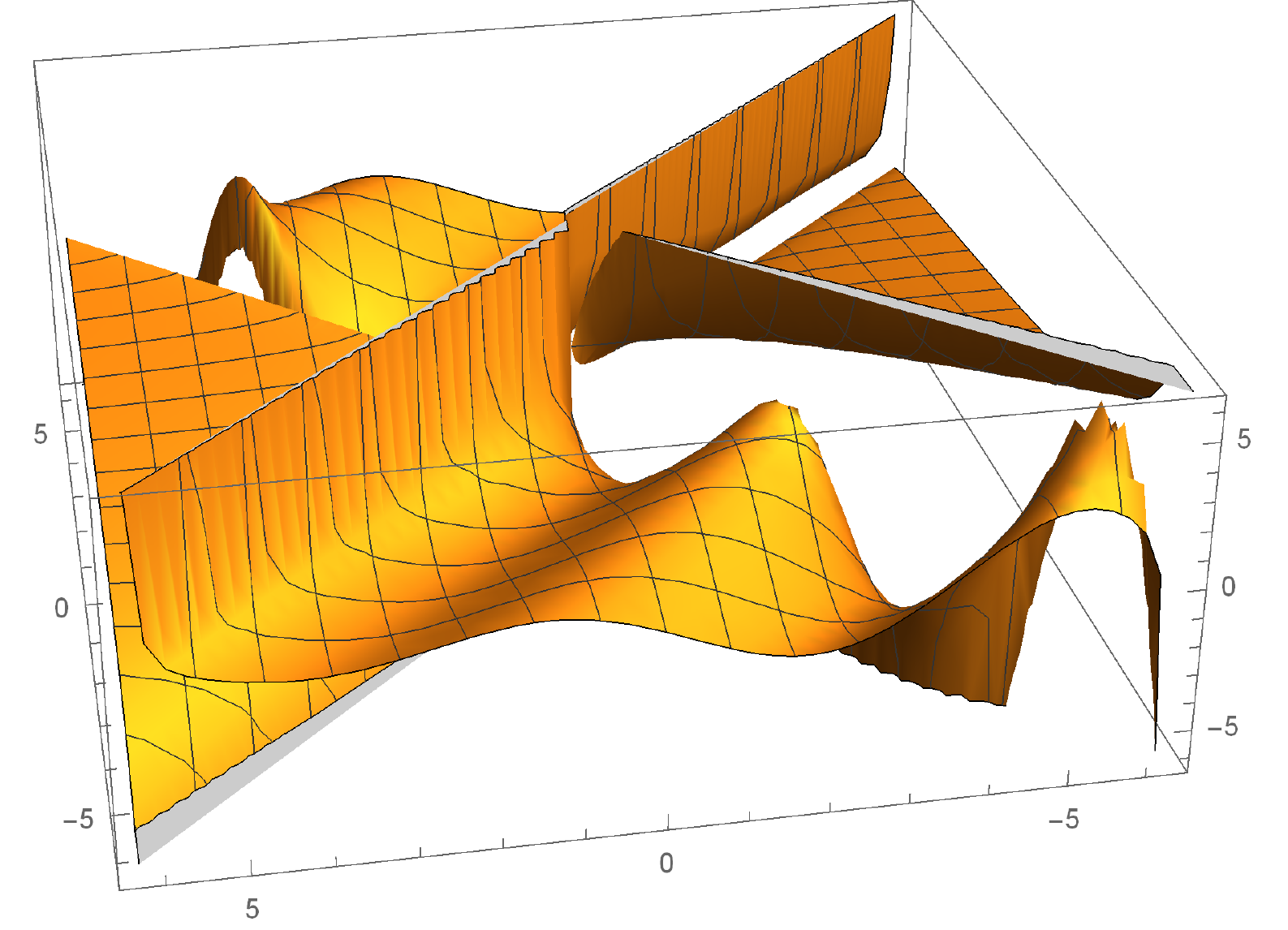}
	\caption[Figure 2.]{Solution for $\alpha=\beta=\frac{1}{2}$. One can see the light cone with minimal real behavior for $x<t$.}
\end{figure}

\begin{figure}
	\includegraphics{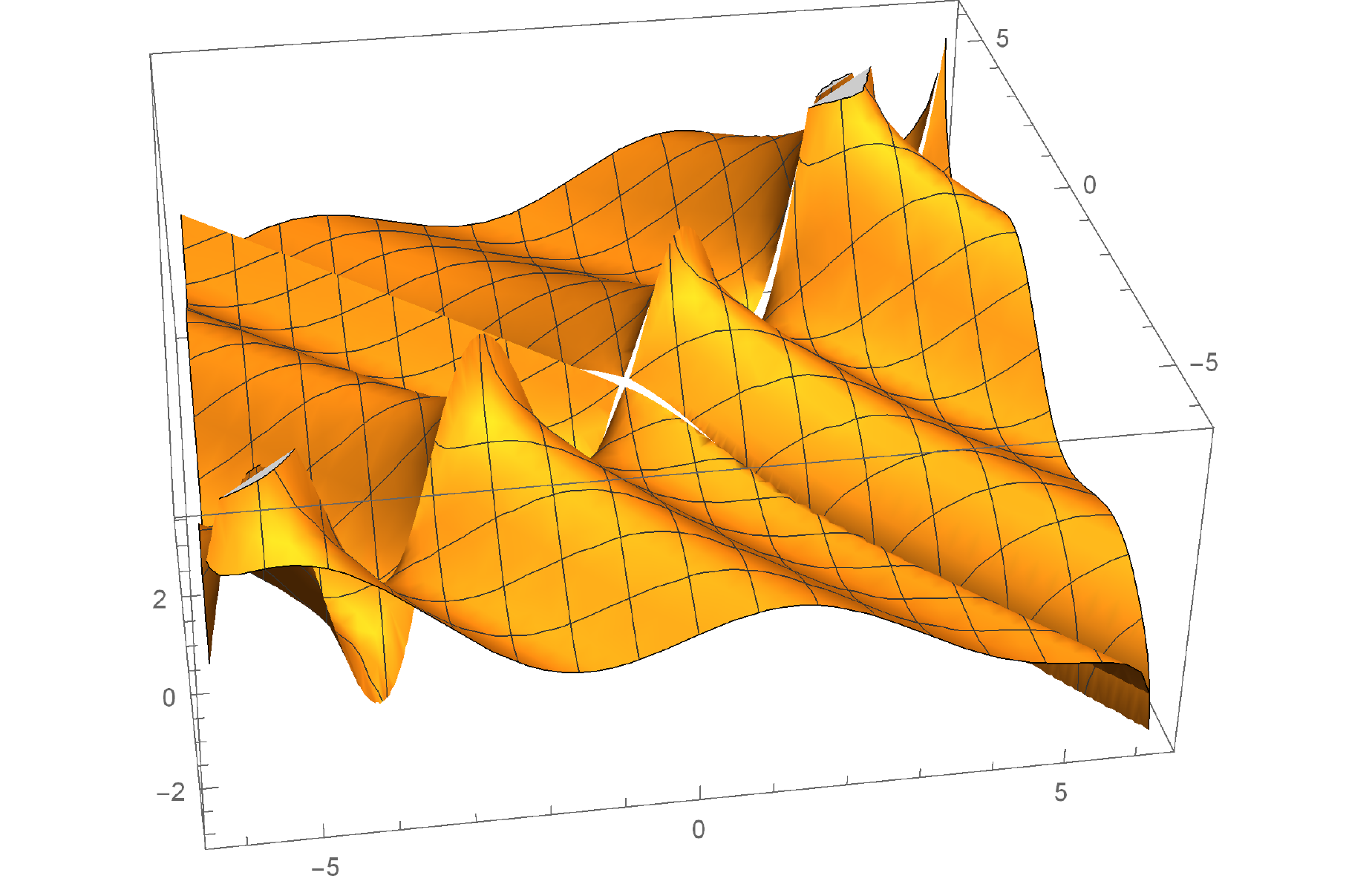}
	\caption[Figure 3.]{Solution for $\alpha=\beta=\frac{3}{4}$. The light cone appears again, but with much more behavior for $x<t$.}
\end{figure}

\begin{figure}
	\includegraphics{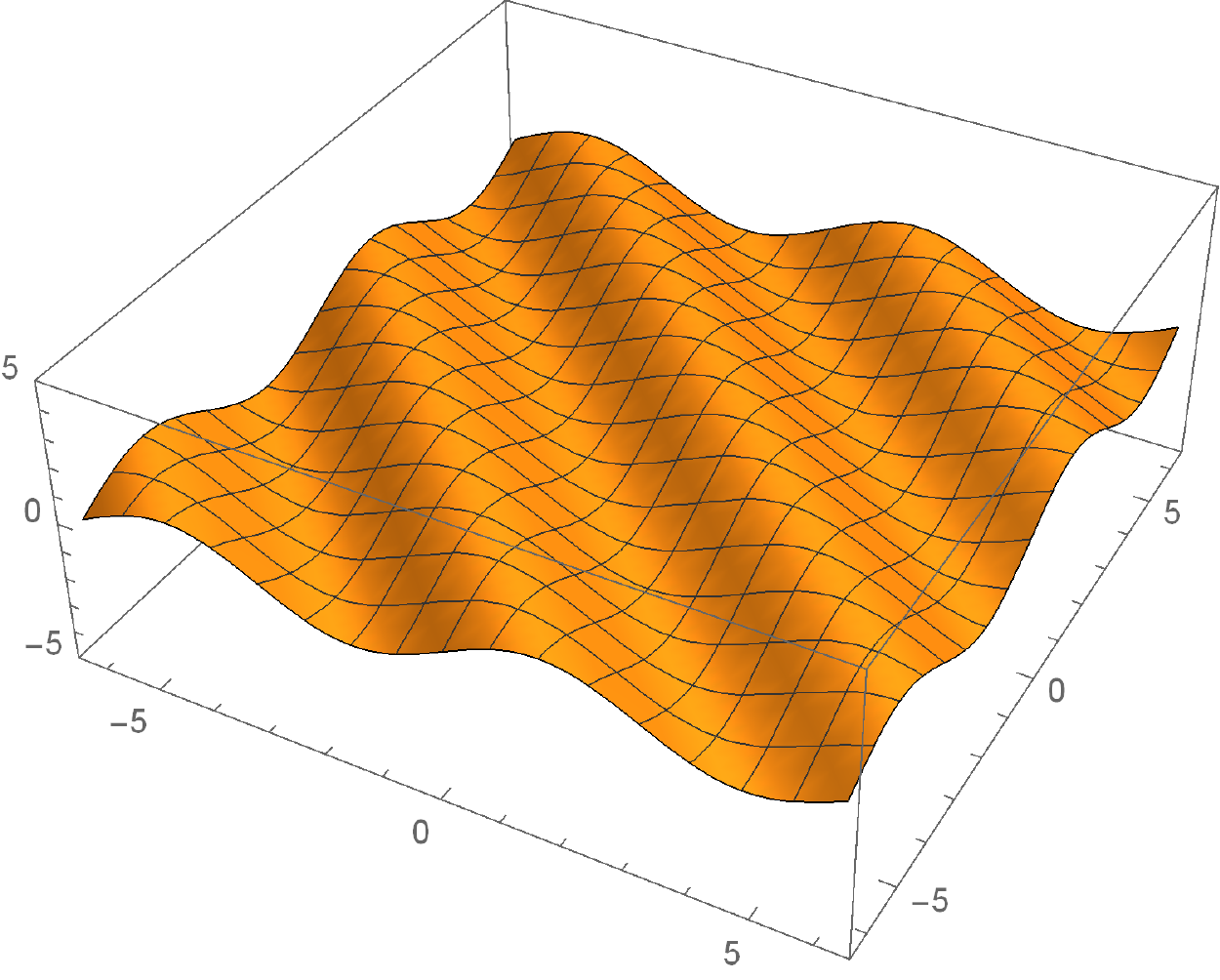}
	\caption[Figure 4.]{Solution for $\alpha=\beta=1$. The light cone has disappeared, and we recover the normal wave equation solution}
\end{figure}

\begin{figure}
	\includegraphics{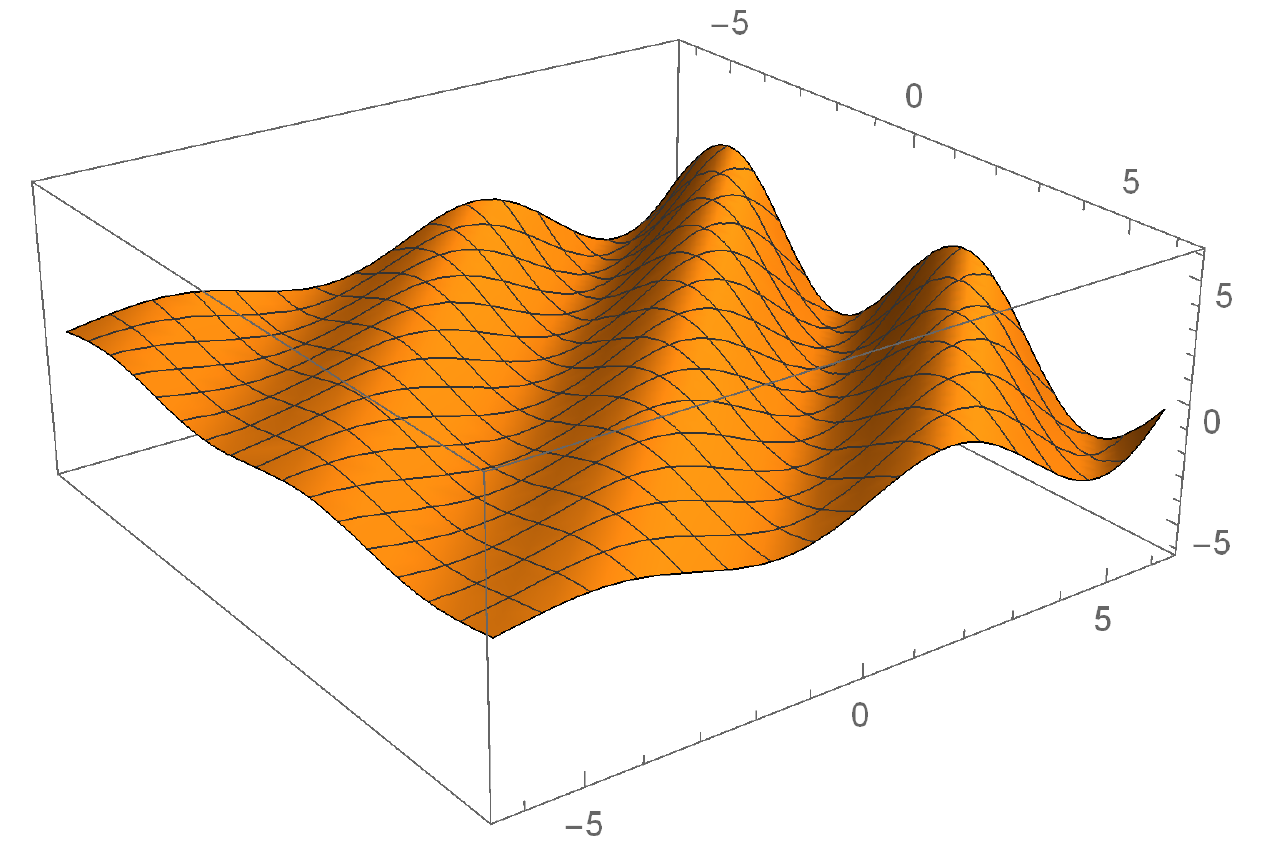}
	\caption[Figure 5.]{Solution for $\alpha/\beta<1$. There seems to be increasing amplitude over time. So-called ``negative damping''.}
\end{figure}

\begin{figure}
	\includegraphics{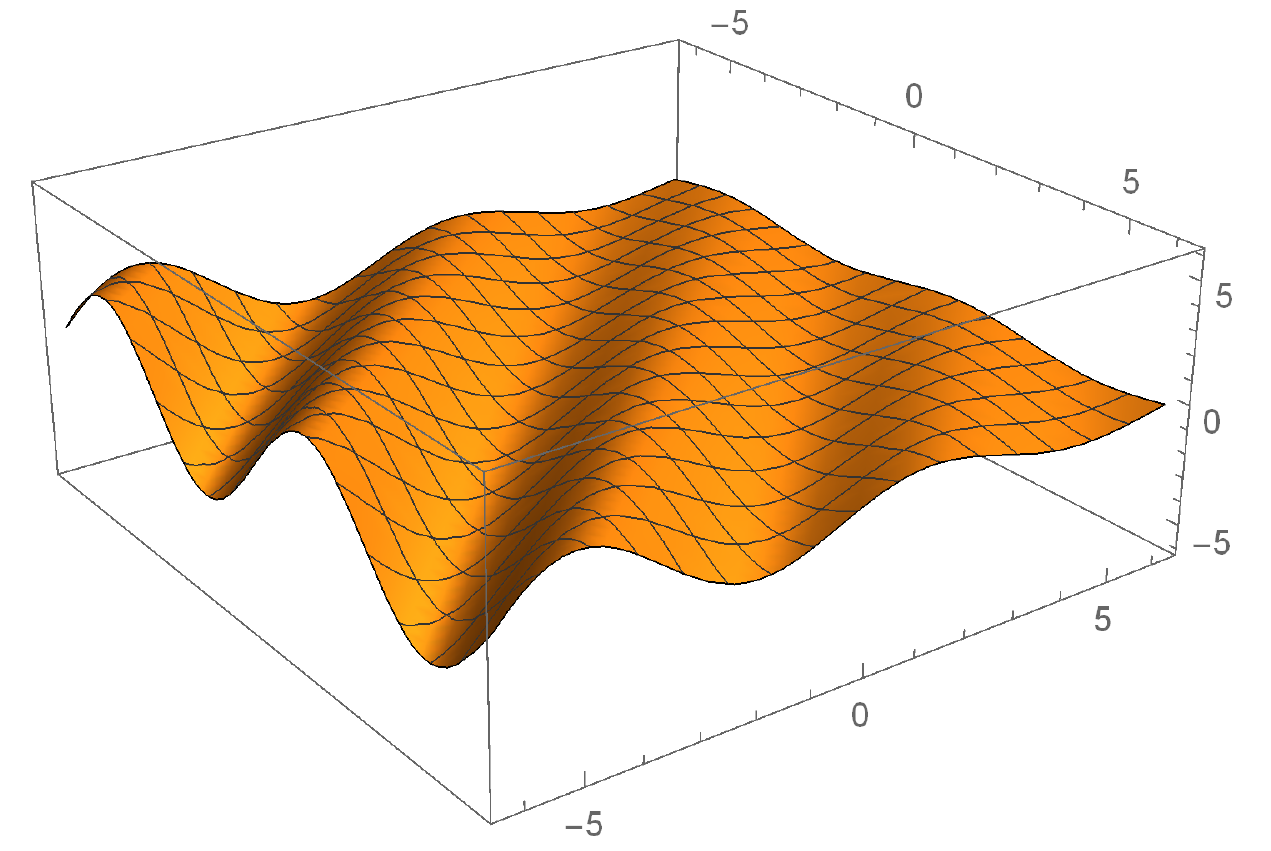}
	\caption[Figure 6.]{Solution for $\alpha/\beta>1$. We see precisely the opposite of the prior case, where there is positive damping over time.}
\end{figure}

\begin{figure}
	\includegraphics{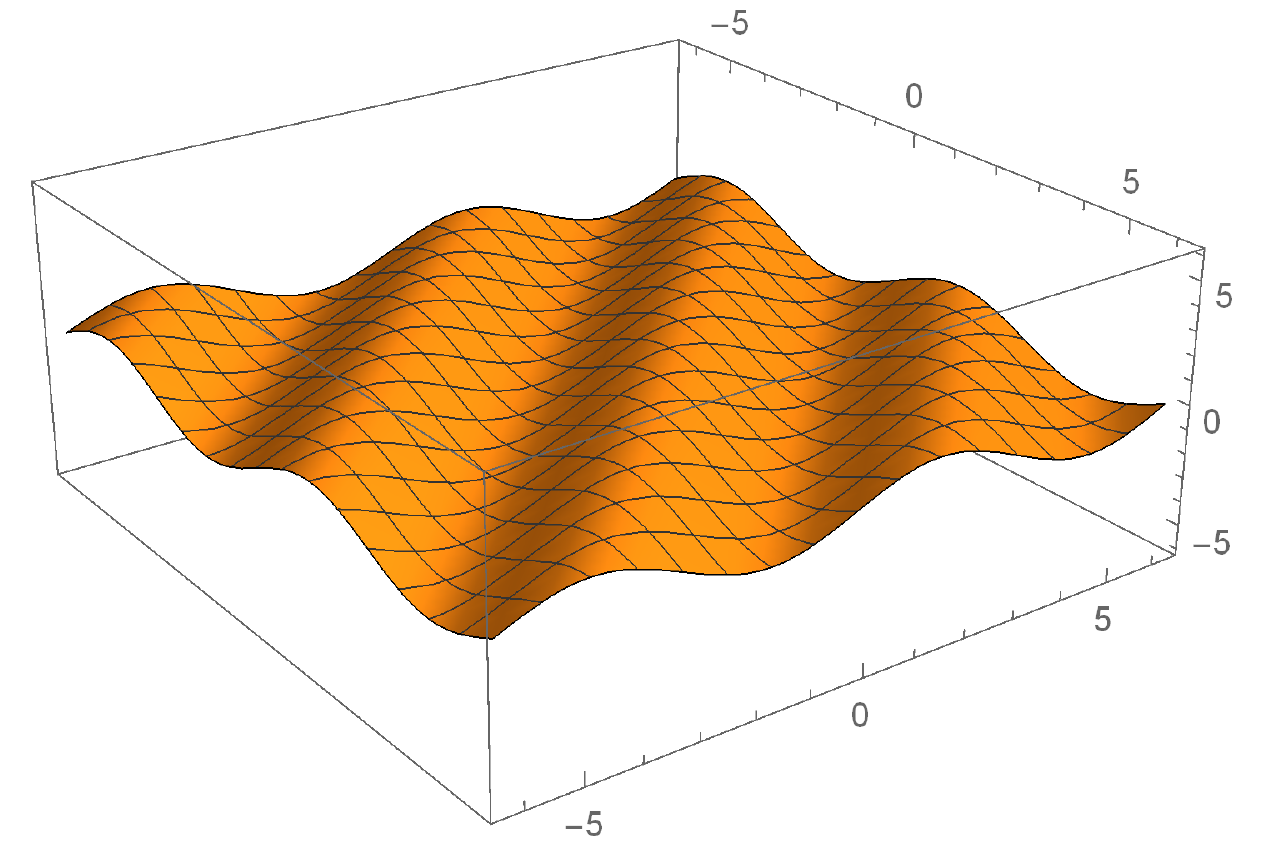}
	\caption[Figure 7.]{Solution for $\alpha/\beta=1$. There is no damping, such as the regular wave equation solution would imply .}
\end{figure}

\end{document}